\definecolor{dblue}{rgb}{0.21,0.21,0.55}
\renewcommand{\P}{\mathbb{P}}
\newcommand{\R}{\mathbb{R}}
\DeclareMathAccent{\verywidehat}{\mathord}{largesymbols}{'144}
\newtheorem{prop}{Proposition}[section]
\newtheorem{lem}{Lemma}
\newtheorem{rem}[prop]{Remark}
\begin{document}
\renewcommand*{\thefootnote}{\fnsymbol{footnote}}

\title{Gumbel convergence of the maximum of convoluted half-normally distributed random variables}
\author[1]{Markus Bibinger\footnote{Financial support from the Deutsche Forschungsgemeinschaft (DFG) under grant 403176476 is gratefully acknowledged.}}
\address[1]{Faculty of Mathematics and Computer Science, Julius-Maximilians-Universit\"at W\"urzburg} 
\normalsize
\begin{frontmatter}

\vspace{-.7cm} 

\begin{abstract}
{{\normalsize \noindent
In this note, we establish the convergence in distribution of the maxima of i.i.d.\ random variables to the Gumbel distribution with the associated normalizing sequences for several examples that are related to the normal distribution. Motivated by tests for jumps in high-frequency data, our main interest is in the half-normal distribution and the sum or difference of two independent half-normally distributed random variables. Since the half-normal distribution is neither stable nor symmetric, these examples are non-obvious generalizations. It is shown that the sum and difference of two independent half-normally distributed random variables and other examples yield distributions with tail behaviours that relate to the normal case. It turns out that the Gumbel convergence for all such distributions can be proved following similar steps. We illustrate the results in Monte Carlo simulations.
}}
\begin{keyword}
Convolution tails \sep extreme value theory\sep  Gumbel convergence \sep half-normal distribution \\[.25cm] 
\MSC[2010] 60G70
\end{keyword}
\end{abstract}

\end{frontmatter}
\thispagestyle{plain}

\onehalfspacing
\section{Introduction\label{sec:1}}
From extreme value theory it is known that non-degenerate limit distributions of normalized maxima of i.i.d.\  real-valued, absolutely continuously distributed random variables are always extreme value distributions of Weibull, Fr\'{e}chet or Gumbel type, see \cite{Gnedenko}. If $F$ denotes the cumulative distribution function (cdf) of such i.i.d.\ random variables, we write $F\in \text{MDA}(G_{\gamma})$, that is, $F$ is in the \emph{maximum domain of attraction} of the generalized extreme value distribution which is specified by the extreme value index $\gamma\in\R$. If the right-end point of $F$ is $+\infty$, the limit is a Fr\'{e}chet distribution, $\gamma>0$, if $F$ has Pareto tails and Gumbel, $\gamma=0$, if $F$ has exponential tails. This note considers cdfs $F$ with exponential tails, such that $F\in \text{MDA}(\Lambda)$, where we use the standard notation $\Lambda$ for the Gumbel distribution $G_{0}$. We write $Z\stackrel{d}{\sim} F$, if the random variable $Z$ has cdf $F$, and $Z\sim\mathcal{N}(\mu,\sigma^2)$, if $Z$ is normally distributed with mean $\mu\in\R$, and variance $\sigma^2>0$. $Z_i\stackrel{i.i.d.}{\sim} F$, and $Z_i\stackrel{i.i.d.}{\sim}\mathcal{N}(\mu,\sigma^2)$, means that all random variables $Z_i$ of a sequence $(Z_i)_{1\le i\le n}$ are independent and identically distributed. We write
\begin{align}\label{gumbel}
\frac{\max_{1\le i\le n}Z_i-b_n}{a_n}\stackrel{d}{\longrightarrow} \Lambda\,,
\end{align}
for the convergence in distribution to the Gumbel limit distribution, i.e.\ when it holds for all $x\in\R$ that 
\begin{align}
\lim_{n\to\infty}\P\Big(a_n^{-1}\Big(\max_{1\le i\le n}Z_i-b_n\Big)\Big)\le x\Big)= \exp\big({-e^{-x}}\big)\,.
\end{align}
For $Z\sim\mathcal{N}(0,\sigma^2)$, $|Z|$ has a \emph{half-normal distribution}. The density of the half-normal distribution
\begin{align}\label{hnden}f_{H\negthinspace N}(x)=\sqrt{\frac{2}{\pi}}e^{-x^2/2}~,~x\ge 0,\end{align}
is easily obtained from the density of the normal distribution exploiting its symmetry. The distribution of $Z=X+Y$ is the convolution of the distributions of $X$ and $Y$. The convolution of two normally distributed random variables $X$ and $Y$ is again normally distributed. Since the half-normal distribution does not satisfy such a stability, the extreme value convergences for $|X|+|Y|$ and $|X|-|Y|$ are not obvious and established in this note together with some related examples. For all examples, we establish a sufficient condition that the distributions are in the maximum domain of attraction of the Gumbel distribution although this is clear by the exponential tail behaviour. The main open problem solved here is rather to determine the normalizing sequences $(a_n)$ and $(b_n)$ in \eqref{gumbel} for the different examples. The analysis provided in this note leads me to the conjecture, that for all distributions with a certain tail behaviour of Gaussian type, that is, the right tails are asymptotically proportional to $h(t)\,\exp(-c t^2)$, with some rational function $h$, the Gumbel convergence can be proved by a modification of the proof given here following the same steps. Moreover, the distributions of sums and differences remain within this class.

The question about the precise result of the Gumbel convergence \eqref{gumbel}, when $Z_i$ are (absolute) differences of two independent half-normally distributed random variables, arose when working on a test for jumps in high-frequency financial order-book data which are modelled by a sum of a semi-martingale efficient price process and one-sided microstructure noise. This model has been proposed by \cite{BJR}. There, the half-normal distribution occurs as the distribution of the maximum of Brownian motion over some fix interval which by the reflection principle for Brownian motion equals the distribution of the absolute value of the end-point. Comparing values on neighboured intervals to test for jumps leads to the (absolute) difference of two independent half-normally distributed random variables. A global test for jumps is then based on the maximal (absolute) difference for which we need the result provided in Proposition \ref{nvthm}  of this note.

There are more standard methods in econometrics based on the considered examples of Gumbel convergence. The Gumbel test for price jumps in high-frequency financial data by \cite{leemykland} is very popular. The asymptotic distribution under the null hypothesis of no price jump, when the price is modelled by discrete recordings of a continuous semi-martingale, is by Lemma 1 of \cite{leemykland} the Gumbel distribution. 
However, there is a small but relevant typo in the result, since the test statistic uses \emph{absolute} returns combined with the normalizing sequences for the normal instead of the half-normal distribution. We shall see in the Monte Carlo simulations in Section \ref{sec:4} that, although the results in \eqref{g1}-\eqref{g6} look very similar at a quick glance, the differences of the normalizing sequences are important.

\section{Results and discussion\label{sec:2}}

\begin{prop}\label{nvthm}
Let $(X_1,\ldots ,X_n,Y_1,\ldots, Y_n)$ be a $2n$-dimensional vector of i.i.d.\ standard normally distributed random variables. 
\begin{enumerate}
\begin{subequations}
\item If $Z_i=X_i$, \eqref{gumbel} holds with the sequences
\begin{align}\label{g1}
a_n=\frac{1}{\sqrt{2\log (n)}}~, ~\mbox{and}~~b_n=\sqrt{2\log( n)}+\delta_n~, ~\mbox{with}~~\delta_n=-\frac{\log(4\pi\log (n))}{2\sqrt{2\log (n)}}\,.
\end{align}
\item If $Z_i=|X_i|$, \eqref{gumbel} holds with the sequences
\begin{align}\label{g2}
a_n=\frac{1}{\sqrt{2\log (2n)}}~, ~\mbox{and}~~b_n=\sqrt{2\log (2n)}+\delta_n~, ~\mbox{with}~~\delta_n=-\frac{\log(4\pi\log (2n))}{2\sqrt{2\log (2n)}}\,.
\end{align}
\item If $Z_i=X_i+Y_i$, and as well for $Z_i=X_i-Y_i$, \eqref{gumbel} holds with the sequences
\begin{align}\label{g3}
a_n=\frac{1}{\sqrt{\log (n)}}~, ~\mbox{and}~~b_n=2\sqrt{\log( n)}+\delta_n~, ~\mbox{with}~~\delta_n=-\frac{\log(4\pi\log (n))}{2\sqrt{\log (n)}}\,.
\end{align}
\item If $Z_i=|X_i|+|Y_i|$, \eqref{gumbel} holds with the sequences
\begin{align}\label{g4}
a_n=\frac{1}{\sqrt{\log (4n)}}~, ~\mbox{and}~~b_n=2\sqrt{\log (4n)}+\delta_n~, ~\mbox{with}~~\delta_n=-\frac{\log(4\pi\log (4n))}{2\sqrt{\log (4n)}}\,.
\end{align}
\item If $Z_i=|X_i|-|Y_i|$, \eqref{gumbel} holds with the sequences
\begin{align}\label{g5}
a_n=\frac{1}{\sqrt{2\log (n)}}~, ~\mbox{and}~~b_n=\sqrt{2\log (n)}+\delta_n~, ~\mbox{with}~~\delta_n=-\frac{\log(\pi\log (n))}{\sqrt{2\log (n)}}\,.
\end{align}
\item If $Z_i=\big||X_i|-|Y_i|\big|$, \eqref{gumbel} holds with the sequences
\begin{align}\label{g6}
a_n=\frac{1}{\sqrt{2\log (2n)}}~, ~\mbox{and}~~b_n=\sqrt{2\log (2n)}+\delta_n~, ~\mbox{with}~~\delta_n=-\frac{\log(\pi\log (2n))}{\sqrt{2\log (2n)}}\,.
\end{align}
\end{subequations}
\begin{rem}
While the normal distribution in \eqref{g1} is a well-known example for a distribution in the maximum domain of attraction of the Gumbel distribution, the half-normal case \eqref{g2} and the (absolute) difference and the sum of independent half-normally distributed random variables, \eqref{g4}-\eqref{g6}, have so far not been focussed on in the literature. The half-normal case \eqref{g2} might appear intuitive by the symmetry of the normal distribution. The convergence of the maximum of i.i.d.\ random variables which are defined as sums is usually not readily obtained from known extreme value convergences of the summands. For the normal distribution, however, \eqref{g3} is obvious by \eqref{g1} with the convolution property that $X_i+Y_i\stackrel{d}{\sim} \mathcal{N}(0,2)$. Let me point out that $|X_i|+|Y_i|$ and $|X_i|-|Y_i|$ instead are not half-normally distributed, that is, the half-normal distribution is not a stable distribution.  Nevertheless, the convergences \eqref{g4}-\eqref{g6} can be proved using similar ingredients as for \eqref{g1} and an asymptotic expansion of the convolution integrals. Naturally, in contrast to \eqref{g3}, which applies to the sum and difference, \eqref{g4} and \eqref{g5} are different, since the half-normal distribution is not symmetric. 
\end{rem}
\end{enumerate}
\end{prop}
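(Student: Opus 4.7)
The plan is to verify in each case the standard extreme-value criterion: \eqref{gumbel} holds with sequences $(a_n,b_n)$ if and only if
\begin{align*}
n\,\bar F_Z(b_n+a_n x)\longrightarrow e^{-x}\quad\text{for every }x\in\R,
\end{align*}
where $\bar F_Z=1-F_Z$. This equivalence comes from $\P(\max_i Z_i\le b_n+a_n x)=(1-\bar F_Z(b_n+a_n x))^n$, whose limit is $\exp(-e^{-x})$ precisely under the displayed condition. Cases \eqref{g1}--\eqref{g3} are then essentially immediate from Mills' ratio, $\bar\Phi(z)\sim \phi(z)/z$: the half-normal case \eqref{g2} merely doubles the Gaussian tail (replacing $n$ by $2n$), and \eqref{g3} follows from $X_i\pm Y_i\sim\mathcal{N}(0,2)$ together with \eqref{g1}. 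The substantial work lies in the three convolution examples \eqref{g4}--\eqref{g6}.

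For $Z=|X|+|Y|$ with $z\ge 0$, completing the square in the convolution of the half-normal densities gives
\begin{align*}
f_Z(z)=\frac{2}{\pi}\,e^{-z^2/4}\int_{-z/2}^{z/2}e^{-u^2}\,du\;\sim\;\frac{2}{\sqrt{\pi}}\,e^{-z^2/4},
\end{align*}
whereas for $Z=|X|-|Y|$ with $z\ge 0$ the analogous computation yields
\begin{align*}
f_Z(z)=\frac{2}{\pi}\,e^{-z^2/4}\int_{z/2}^{\infty}e^{-u^2}\,du\;\sim\;\frac{2}{\pi z}\,e^{-z^2/2},
\end{align*}
after inserting the Mills-type expansion of $\operatorname{erfc}$. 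Integrating these densities once more, using the standard asymptotic $\int_z^\infty e^{-t^2/(2\sigma^2)}\,dt\sim \sigma^2 e^{-z^2/(2\sigma^2)}/z$, delivers the tails
\begin{align*}
\bar F_Z(z)\sim\frac{4}{z\sqrt{\pi}}\,e^{-z^2/4}\ \text{in }\eqref{g4},\qquad \bar F_Z(z)\sim\frac{2}{\pi z^2}\,e^{-z^2/2}\ \text{in }\eqref{g5}.
\end{align*}
Case \eqref{g6} then reduces to \eqref{g5} by the symmetry of $|X|-|Y|$ around zero, which gives $\P(\bigl||X|-|Y|\bigr|>z)=2\,\P(|X|-|Y|>z)$ for $z\ge 0$ and so replaces $n$ by $2n$, exactly as in the passage from \eqref{g1} to \eqref{g2}.

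With these tail asymptotics in hand, $b_n$ is fixed by solving $n\bar F_Z(b_n)\to 1$: I write $b_n=\alpha\sqrt{\log(\beta n)}+\delta_n$ with the scale $(\alpha,\beta)$ dictated by the Gaussian exponent, expand $b_n^2$ to first order in $\delta_n$, and collect the polynomial prefactor into the correction $\delta_n$. The normaliser $a_n$ is then read off as the inverse hazard rate at $b_n$, which equals the reciprocal of the derivative of the Gaussian exponent and gives $a_n=2/b_n$ in \eqref{g4} and $a_n=1/b_n$ in \eqref{g5}--\eqref{g6}. A Taylor expansion of $(b_n+a_n x)^2$ around $b_n$ finally confirms $n\bar F_Z(b_n+a_n x)\to e^{-x}$. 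The main obstacle is the bookkeeping: the two Gaussian rates $e^{-z^2/4}$ and $e^{-z^2/2}$ come with different rational prefactors, $z^{-1}$ versus $z^{-2}$, and it is precisely these prefactors that force the correction $\log(4\pi\log(4n))$ in \eqref{g4} but $\log(\pi\log n)$ in \eqref{g5}. Ensuring that the $O(z^{-2})$ relative errors in Mills' and $\operatorname{erfc}$ expansions remain negligible after multiplication by $n$ and evaluation at $b_n+a_n x$ is what requires care.
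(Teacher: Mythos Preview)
Your proposal is correct and follows essentially the same route as the paper: compute the convolution densities for $|X|\pm|Y|$ by completing the square, pass to the tail via a Mills-type integration, and then read off $(a_n,b_n)$ from $n\bar F_Z(b_n)\to 1$; the symmetry trick $n\mapsto 2n$ handles \eqref{g2} and \eqref{g6} exactly as the paper does. The only organisational difference is that the paper first verifies the von~Mises-type condition $\bar F(t+xf(t))/\bar F(t)\to e^{-x}$ separately (with $f(t)=2/t$ in \eqref{g4} and $f(t)=1/t$ in \eqref{g5}) before solving for the sequences, whereas you go directly to $n\bar F_Z(b_n+a_nx)\to e^{-x}$; these are equivalent here and your treatment of the $O(z^{-2})$ error control is adequate.
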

\section{Proof of the results\label{sec:3}}
We begin with a lemma which is used to exploit the symmetry of a distribution in a simple way.
\begin{lem}\label{lem1}
Suppose that $Z_i\stackrel{i.i.d.}{\sim} F\in \text{MDA}(G_{\gamma})$, with
\begin{align}\label{leman}
\frac{\max_{1\le i\le n}Z_i-b_n}{a_n}\stackrel{d}{\longrightarrow} G_{\gamma}\,,
\end{align}
with the (cdf of the) generalized extreme value distribution $G_{\gamma}$, and let $F$ be symmetric such that $Z_i$ and $-Z_i$ are identically distributed, $Z_i\stackrel{d}{=}-Z_i$. For $|Z_i|\stackrel{i.i.d.}{\sim} F_{|\cdot|}$, it follows that $ F_{|\cdot|}\in \text{MDA}(G_{\gamma})$, with
\begin{align}\label{lemaus}
\frac{\max_{1\le i\le n}|Z_i|-b_{2n}}{a_{2n}}\stackrel{d}{\longrightarrow} G_{\gamma}\,.
\end{align}
\end{lem}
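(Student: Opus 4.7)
The plan is to translate the condition on $|Z|$ directly into the already-given condition on $Z$ via the tail identity enforced by symmetry. Since $F$ is symmetric, $F(-x)=1-F(x)$, so for every $x\ge 0$
\[
F_{|\cdot|}(x)=\P(|Z|\le x)=F(x)-F(-x)=2F(x)-1,
\]
and in particular $1-F_{|\cdot|}(x)=2(1-F(x))$ on the right tail. This is the only structural input I need; the rest is bookkeeping with the standard Poisson-type equivalence between convergence of $F^n$ and convergence of $n(1-F)$.

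Next I would exploit the assumed convergence \eqref{leman} with index $2n$ in place of $n$. That is, for every $x$ with $G_\gamma(x)>0$,
\[
\bigl(F(a_{2n}x+b_{2n})\bigr)^{2n}\longrightarrow G_\gamma(x),
\]
which, after taking logarithms and using $-\log(1-u)\sim u$ as $u\downarrow 0$, is equivalent to
\[
2n\bigl(1-F(a_{2n}x+b_{2n})\bigr)\longrightarrow -\log G_\gamma(x).
\]
Here I would note that for any MDA, $a_{2n}x+b_{2n}$ is eventually non-negative for every fixed $x$ (for a symmetric $F$ that is non-degenerate and admits a Gumbel/Fr\'echet/Weibull limit, $b_n$ cannot stay bounded away from the upper endpoint, which is $+\infty$ or a positive number depending on $\gamma$; in either case positivity for large $n$ is automatic), so that the identity $1-F_{|\cdot|}=2(1-F)$ applies at the point $a_{2n}x+b_{2n}$.

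Substituting the identity into the display above yields
\[
n\bigl(1-F_{|\cdot|}(a_{2n}x+b_{2n})\bigr)=2n\bigl(1-F(a_{2n}x+b_{2n})\bigr)\longrightarrow -\log G_\gamma(x),
\]
and reversing the logarithm step gives $\bigl(F_{|\cdot|}(a_{2n}x+b_{2n})\bigr)^n\to G_\gamma(x)$, which is exactly \eqref{lemaus}. On the set $\{x:G_\gamma(x)=0\}$ the same sandwich argument works trivially, since then $2n(1-F(a_{2n}x+b_{2n}))\to\infty$ forces $n(1-F_{|\cdot|}(a_{2n}x+b_{2n}))\to\infty$ as well.

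There is no serious obstacle: the whole content is the tail identity $1-F_{|\cdot|}=2(1-F)$ on $[0,\infty)$, combined with the well-known fact that $F^{k_n}(t_n)\to c\in(0,1)$ is equivalent to $k_n(1-F(t_n))\to -\log c$. The only point requiring a sentence of justification is that the argument $a_{2n}x+b_{2n}$ lies in $[0,\infty)$ eventually, so that the identity is applicable; after that, replacing $2n$ by $n$ on the left while compensating the factor of $2$ on the right is immediate.
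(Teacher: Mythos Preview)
Your proof is correct and follows essentially the same route as the paper: both arguments reduce to the tail identity $1-F_{|\cdot|}(x)=2(1-F(x))$ for $x\ge 0$ and then invoke the standard equivalence between $F^n(a_n x+b_n)\to G_\gamma(x)$ and $n\bigl(1-F(a_n x+b_n)\bigr)\to -\log G_\gamma(x)$, applied along the subsequence $2n$. Your version is slightly more explicit in checking that $a_{2n}x+b_{2n}\ge 0$ eventually and in handling the set $\{G_\gamma=0\}$, but there is no substantive difference in strategy.
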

\begin{proof}
For a cdf $F$, we write $\bar F(x)=1-F(x)$ for the tail function. By the symmetry, it holds for $x\ge 0$ that
\[\P(|Z_i|>x)=\P(Z_i>x)+\P(-Z_i>x)=2\,\P(Z_i>x)\,,\]
such that $ \bar F_{|\cdot|}(x)=2\bar F(x),x\ge 0$. Equivalent to \eqref{leman} is the convergence
\[\lim_{n\to\infty} n\bar F(a_n t+b_n)=-\log\big(G_{\gamma}(t) \big)\,,\]
see Eq.\ (1.1.3) in \cite{haan}. The convergence
\[\lim_{n\to\infty} n\bar F_{|\cdot|}(\tilde a_n t+\tilde b_n)=\lim_{n\to\infty} 2n\bar F(\tilde a_n t+\tilde b_n)=-\log\big(G_{\gamma}(t) \big)\,,\]
is thus satisfied setting $\tilde a_n=a_{2n}$, and $\tilde b_n=b_{2n}$. We conclude \eqref{lemaus}.
\end{proof}
For asymptotic equivalence of two positive functions $f$ and $g$, we write $f\asymp g$, which means that
\[\lim_{x\to\infty}\frac{f(x)}{g(x)}=1\,.\]
The second lemma provides a neat asymptotic relation between tail functions and densities of normal and half-normal distributions.
\begin{lem}\label{lem2}For some constant $c>0$, it holds true that
\[\int_x^{\infty} e^{-ct^2}\,\mathrm{d} t\asymp \frac{1}{2c}\frac{1}{x} e^{-cx^2}\,.\]
\end{lem}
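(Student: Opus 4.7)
The plan is to establish this via a single integration by parts, which is the classical Mills ratio argument for Gaussian tails. Setting $I(x):=\int_x^{\infty}e^{-ct^2}\,\mathrm{d}t$ and rewriting $e^{-ct^2}\,\mathrm{d}t = (-2ct)^{-1}\,\mathrm{d}(e^{-ct^2})$, integration by parts yields
\[I(x) = \frac{1}{2cx}\,e^{-cx^2} - \int_x^{\infty}\frac{e^{-ct^2}}{2ct^2}\,\mathrm{d}t.\]
Call the remainder $R(x)$. Since $1/(2ct^2)\le 1/(2cx^2)$ for $t\ge x$, one obtains the pointwise bound $0\le R(x) \le I(x)/(2cx^2)$. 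Substituting this back into the identity produces the two-sided inequality
\[\Big(1+\tfrac{1}{2cx^2}\Big)^{-1}\cdot\frac{e^{-cx^2}}{2cx}\;\le\; I(x)\;\le\; \frac{e^{-cx^2}}{2cx},\]
and letting $x\to\infty$ gives the asserted asymptotic equivalence $I(x)\asymp e^{-cx^2}/(2cx)$.

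There is essentially no hard step: the subtle point is producing a matching lower bound on $I(x)$, and the integration by parts conveniently delivers both bounds simultaneously from the same identity. An equally short alternative would be L'Hôpital's rule applied to $I(x)\big/(e^{-cx^2}/(2cx))$, since both quantities vanish as $x\to\infty$ and the quotient of derivatives simplifies to $1/(1+1/(2cx^2))$, which tends to $1$; the integration-by-parts route is preferable here only because it produces an explicit error term that will be useful in the rest of the paper when applying the lemma to analyse the tails of the various convolutions appearing in Proposition~\ref{nvthm}.
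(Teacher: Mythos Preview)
Your proof is correct. The paper, however, takes the L'H\^opital route that you mention only as an aside: it differentiates $x^{-1}e^{-cx^2}$ directly and applies L'H\^opital to the ratio $\int_x^{\infty}e^{-ct^2}\,\mathrm{d}t\big/\big(x^{-1}e^{-cx^2}\big)$, obtaining the limit $1/(2c)$. Your integration-by-parts argument is the classical Mills-ratio derivation and has the advantage of giving the explicit two-sided inequality $(1+\tfrac{1}{2cx^2})^{-1}\le 2cx\,e^{cx^2}I(x)\le 1$ rather than merely a limit statement; the paper's L'H\^opital argument is shorter but non-quantitative. One small remark: your closing sentence suggests the explicit error term will be needed later in the paper, but in fact the subsequent applications in the proof of Proposition~\ref{nvthm} use only the bare asymptotic equivalence $\asymp$, so the extra precision, while nice, is not actually exploited.
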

\begin{proof}
The chain or product rule of differentiation yield that
\[\frac{\mathrm{d} }{\mathrm{d} x}\frac{e^{-cx^2}}{x}=-2c\,e^{-cx^2}-x^{-2}e^{-cx^2}\,.\]
L'Hospital's rule hence yields that
\begin{align*}
\lim_{x\to\infty}\frac{\int_x^{\infty} e^{-ct^2}\,\mathrm{d} t}{x^{-1} e^{-cx^2}}&=\lim_{x\to\infty}\frac{-e^{-cx^2}}{-x^{-2}e^{-cx^2}-2c\,e^{-cx^2}}\\
&=\lim_{x\to\infty}\frac{1}{x^{-2}+2c}=\frac{1}{2c}\,,
\end{align*}
what proves the claim.
\end{proof}
We first prove \eqref{g1}. There are different proofs for this result available in the literature, see, for instance, Example 1.1.7 in \cite{haan}. We follow the proof from Section 5.4 of \cite{kabluchko}, which can be generalized with Lemmas \ref{lem1} and \ref{lem2} below to prove \eqref{g2}-\eqref{g6}. Denote by
\[\varphi(x)=\frac{1}{\sqrt{2\pi}}\,e^{-x^2/2}~,~\Phi(x)=\int_{-\infty}^x\varphi(y)\,\mathrm{d} y\,,\]
the density and the cdf of the standard normal distribution, respectively. Lemma \ref{lem2} yields that $1-\Phi(x)\asymp \varphi(x)/x$. By Theorem 1.2.1, Equation (1.2.4), in \cite{haan}, \eqref{gumbel} is satisfied if and only if there exists a function $f$, such that for all $x\in\R$, the cdf $F$ of the random variables $(Z_i)$ satisfies
\begin{align}\,\lim_{t\uparrow x^*}\frac{1-F(t+xf(t))}{1-F(t)}=e^{-x}~.\label{crit}\end{align}
$x^*$ is the right end-point of the distribution which is $x^*=+\infty$ in our cases. We verify this condition for $F=\Phi$ with $f(t)=t^{-1}$:
\[\lim_{t\uparrow\infty}\frac{1-\Phi\Big(t+\tfrac{x}{t}\Big)}{1-\Phi(t)}=\lim_{t\uparrow\infty}\frac{\Big(t+\tfrac{x}{t}\Big)^{-1}\exp{\big(-t^2/2-x-x^2/(2t^2)\big)}}{t^{-1}e^{-t^2/2}}=e^{-x}\,,\,\forall\,x\in\R\,.\]
We conclude that $\Phi\in\text{MDA}(\Lambda)$. We determine the sequences $(a_n)$ and $(b_n)$ in \eqref{g1}. We can start with
\begin{align}\label{nv1}\lim_{n\to\infty}n\big(1-\Phi(a_n t+ b_n)\big)=-\log(\Lambda(t))=e^{-t}\,,\end{align}
or directly use $b_n=U(n)$, with $U$ the general notation for the left-continuous generalized inverse of $1/(1-F)$, which is known from extreme value theory, see Remark 1.1.9 in \cite{haan}. This yields that
\begin{align}n=\frac{1}{1-\Phi(b_n)}\asymp\sqrt{2\pi}b_ne^{b_n^2/2}\,.\end{align}
We set $b_n=\sqrt{2\log(n)}+\delta_n$, with a null sequence $\delta_n$. Inserting this gives
\[n\asymp\sqrt{2\pi}\sqrt{2\log(n)}\exp\big(\log(n)+\sqrt{2\log(n)}\delta_n\big)\,,\]
and we find that the identity holds true for
\[\delta_n=\frac{-\log(4\pi\log(n))}{2\sqrt{2\log(n)}}\,.\]
The sequence $(a_n)$ can be obtained in general by $a_n=n\,U^{\prime}(n)$, see Remark 1.1.9 in \cite{haan}. Here, we derive that 
\[a_n=(2\log(n))^{-1/2}~,\] 
what can be deduced directly from \eqref{nv1}.\\[.2cm]
Next, \eqref{g2} is readily implied by \eqref{g1} and Lemma \ref{lem1}.\\[.2cm]
Since
\begin{align*}
\frac{\max_{1\le i\le n}Z_i-b_n}{a_n}\stackrel{d}{\longrightarrow} G_{\gamma}
\end{align*}
is for any constant $\sigma>0$ equivalent to
\begin{align*}
\frac{\max_{1\le i\le n}\sigma Z_i-\sigma b_n}{\sigma a_n}\stackrel{d}{\longrightarrow} G_{\gamma}\,,
\end{align*}
with $\sigma Z_i\stackrel{i.i.d.}{\sim}\mathcal{N}(0,\sigma^2)$, the convolution property of the normal distribution, that $X_i+Y_i\stackrel{i.i.d.}{\sim}\mathcal{N}(0,2)$, gives \eqref{g3}. By symmetry $Y_i\stackrel{d}{=} -Y_i$, such that the sum and the difference have the same distribution.\\[.2cm]
If $f_{H\negthinspace N}$ denotes the density of the half-normal distribution from \eqref{hnden}, we write
\[f_{H\negthinspace N}^{*2}=f_{H\negthinspace N}*f_{H\negthinspace N}\,,\]
for the convolution square determined by the equation
\[f_{H\negthinspace N}^{*2}(x)=\int_0^{\infty}f_{H\negthinspace N}(u)\,f_{H\negthinspace N}(x-u)\,\mathrm{d}u\,.\]
The integral has an explicit solution which we express using the $\text{erf}$ and $\text{erfc}$ functions associated with $\Phi$ by the identities
\[\Phi(x)=\frac{1+\text{erf}(x/\sqrt{2})}{2}~,~\text{erfc}(x)=1-\text{erf}(x)~.\]
We obtain that
\begin{align*}f_{H\negthinspace N}^{*2}(x)&=\frac{2}{\pi}\int_0^{\infty}e^{-u^2/2}\,e^{-(x-u)^2/2}\,\mathrm{d}u\,\\
&=\frac{2}{\pi}\,e^{-x^2/2}\int_0^{\infty}e^{-u^2}\,e^{xu}\,\mathrm{d}u\,\\
&=\frac{2}{\pi}\,e^{-x^2/4}\int_0^{\infty}e^{-(u-x/2)^2}\,\mathrm{d}u\,\\
&=\frac{\sqrt{2}}{\pi}\,e^{-x^2/4}\int_{-x/\sqrt{2}}^{\infty}e^{-v^2/2}\,\mathrm{d}v\,\\
&=\frac{2}{\sqrt{\pi}}\,e^{-x^2/4}\Big(1-\Phi\big(-x/\sqrt{2}\big)\Big)\,\\
&=\frac{2}{\sqrt{\pi}}\,e^{-x^2/4}\,\Phi\big(x/\sqrt{2}\big)\,\\
&=\sqrt{\frac{1}{\pi}}\,e^{-x^2/4}\big(1+\text{erf}(x/2)\big)\\
&\asymp \frac{2}{\sqrt{\pi}}\,e^{-x^2/4}\,.
\end{align*}
This expansion is included in the more general illustration of the tails of the convolution of two densities with Gaussian tail behaviour given in \cite{gausstails} with a different proof. For the tail function $\bar F_{H\negthinspace N}^{*2}$, Lemma \ref{lem2} yields that
\[\bar F_{H\negthinspace N}^{*2}\asymp \int_x^{\infty} \frac{2}{\sqrt{\pi}}\,e^{-t^2/4}\,\mathrm{d}t\asymp \frac{4}{\sqrt{\pi}}\,\frac{e^{-x^2/4}}{x}\,.\]
Hence, \eqref{crit} is satisfied with $f(t)=2t^{-1}$, since
\[\lim_{t\uparrow\infty}\frac{\bar F_{H\negthinspace N}^{*2}\Big(t+\tfrac{2x}{t}\Big)}{\bar F_{H\negthinspace N}^{*2}(t)}=\lim_{t\uparrow\infty}\frac{\Big(t+\tfrac{2x}{t}\Big)^{-1}\exp{\big(-t^2/4-x-(x/t)^2)\big)}}{t^{-1}e^{-t^2/4}}=e^{-x}\,,\,\forall\,x\in\R\,.\]
We conclude that the convoluted half-normal distribution is in the maximum domain of attraction of the Gumbel distribution. We determine the normalizing sequences, $(a_n)$ and $(b_n)$, similarly as in the normal case, but establish some crucial differences. We know that
\begin{align*}n=\frac{1}{\bar F_{H\negthinspace N}^{*2}(b_n)}\asymp\frac{\sqrt{\pi}}{4}\,b_n\,e^{b_n^2/4}\,,\end{align*}
and set $b_n=2\sqrt{\log(4n)}+\delta_n$, with a null sequence $\delta_n$. Inserting this yields that
\[n\asymp\frac{\sqrt{\pi}}{4}\,2\sqrt{\log(4n)}\exp\big(\log(4n)+\sqrt{\log(4n)}\delta_n\big)\,,\]
and we obtain that 
\[\delta_n=\frac{-\log(4\pi\log(4n))}{2\sqrt{\log(4n)}}\,.\]
Computing $n U^{\prime}(n)$, starting with $U(n)=b_n$, gives for the sequence $(a_n)$ that 
\[a_n=(\log(4n))^{-1/2}~.\] 
We have proved \eqref{g4}.\\[.2cm]
Denote with $g$ the density of $|X_1-Y_1|$ on the positive real axis. With similar steps as for the convolution, we compute
\begin{align*}g(x)&=\frac{2}{\pi}\int_0^{\infty}e^{-u^2/2}\,e^{-(x+u)^2/2}\,\mathrm{d}u\,\\
&=\frac{\sqrt{2}}{\pi}\,e^{-x^2/4}\int_{x/\sqrt{2}}^{\infty}e^{-v^2/2}\,\mathrm{d}v\,\\
&=\frac{2}{\sqrt{\pi}}\,e^{-x^2/4}\Big(1-\Phi\big(x/\sqrt{2}\big)\Big)\,\\
&=\sqrt{\frac{1}{\pi}}\,e^{-x^2/4}\,\text{erfc}(x/2)\,.
\end{align*}
We then use that
\[ \text{erfc}(x)\asymp \frac{e^{-x^2}}{\sqrt{\pi} x}\,,\]
obtained from Lemma \ref{lem2}, to deduce that
\begin{align*}g(x)\asymp \frac{2}{\pi}\frac{e^{-x^2/2}}{x}\,.\end{align*}
Different to all previous densities, we have the additional factor $x^{-1}$ in the tail behaviour and not only an exponential decay. Using that
\begin{align*}\frac{\mathrm{d} }{\mathrm{d} x}\frac{e^{-x^2/2}}{x^2}=-\frac{e^{-x^2/2}}{x}-2x^{-3}e^{-x^2/2}\,,\end{align*}
l'Hospital's rule yields that
\[\int_x^{\infty} \frac{e^{-t^2/2}}{t}\,\mathrm{d} t\asymp \frac{e^{-x^2/2}}{x^2} \,.\]
We conclude that the associated tail function $\bar G$ satisfies
\[\bar G(x)\asymp \frac{2}{\pi}\,\frac{e^{-x^2/2}}{x^2}\,.\]
Again, \eqref{crit} is satisfied with $f(t)=t^{-1}$, since
\[\lim_{t\uparrow\infty}\frac{\bar G\Big(t+\tfrac{x}{t}\Big)}{\bar G(t)}=\lim_{t\uparrow\infty}\frac{\Big(t+\tfrac{x}{t}\Big)^{-2}\exp{\big(-t^2/2-x-x^2/(2t^2)\big)}}{t^{-2}e^{-t^2/2}}=e^{-x}\,,\,\forall\,x\in\R\,.\]
Setting $b_n=\sqrt{2\log(n)}+\delta_n$, with a null sequence $\delta_n$, we determine the normalizing sequences $(a_n)$ and $(b_n)$, based on
\[n\asymp\frac{\pi}{2}\,b_n^2\,e^{b_n^2/2}\asymp \pi\log(n)\exp\big(\log(n)+\sqrt{2\log(n)}\delta_n\big)\,.\]
We conclude that
\[\delta_n=-\frac{\log(\pi\log (n))}{\sqrt{2\log (n)}}\,,\] 
and \eqref{g5}. \eqref{g6} readily follows from \eqref{g5} with Lemma \ref{lem1}.\hfill\qed
\clearpage
\section{Monte Carlo simulations\label{sec:4}}
\begin{figure}[t]
\begin{center}
\fbox{
\includegraphics[width=6.5cm]{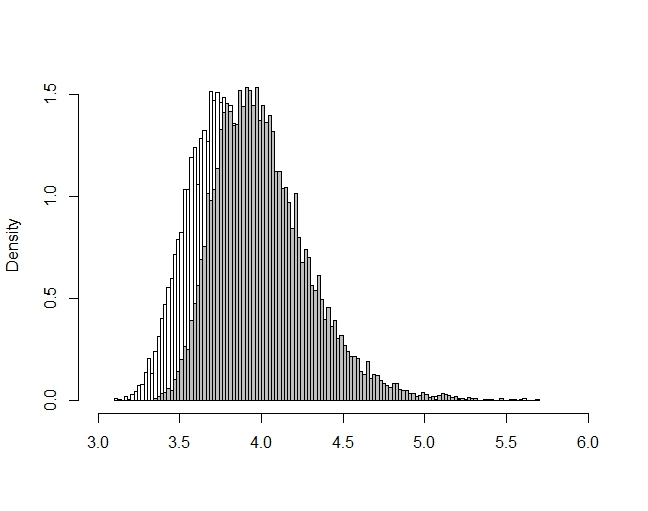}~~~~\includegraphics[width=6.5cm]{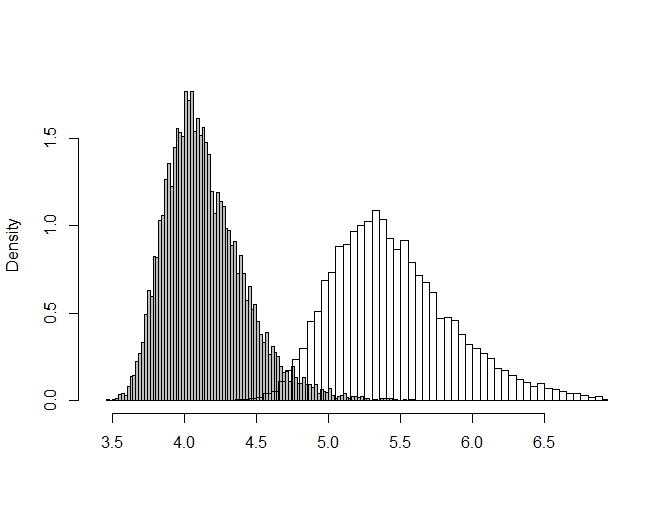}}
\caption{\label{Fig:1}Left: Histogram of $M_n^{(1)}$ (white) and $M_n^{(2)}$ (grey). 
Right: Histogram of $M_n^{(3)}$ (white) and $M_n^{(5)}$ (grey).}
\end{center}\end{figure} 
We perform Monte Carlo simulations where for each example we generate $2n=20.000$ realizations of standard normally distributed random variables $(x_1,\ldots,x_n,y_1,\ldots,y_n)$. We then take in $N=10.000$ Monte Carlo repetitions the maxima of
\begin{subequations}
\begin{align}
M_n^{(1)}&=\max_{1\le i\le n } x_i\,,\\
M_n^{(2)}&=\max_{1\le i\le n } |x_i|\,,\\
M_n^{(3)}&=\max_{1\le i\le n } \big(x_i-y_i\big)\,,\\
M_n^{(4)}&=\max_{1\le i\le n } \big(|x_i|+|y_i|\big)\,,\\
M_n^{(5)}&=\max_{1\le i\le n } \big(|x_i|-|y_i|\big)\,,\\
M_n^{(6)}&=\max_{1\le i\le n } \big||x_i|-|y_i|\big|\,.
\end{align}
\end{subequations}
In Figure \ref{Fig:1} the left plot compares histograms for the $N$ values of $M_n^{(1)}$ and $M_n^{(2)}$. The two empirical distributions are not that far from each other. This might be one reason why the normalizing sequences in \eqref{g1} and \eqref{g2} are sometimes confused without notice. However, for the precision of an asymptotic test, the different normalizing sequences are nevertheless crucial. In fact, $M_n^{(2)}$ is distributed as $M_{2n}^{(1)}$, when we have $2n$ instead of $n$ i.i.d.\ observations, and not as $M_{n}^{(1)}$. The right plot of Figure \ref{Fig:2} compares the empirical distributions of the values of $M_n^{(3)}$ and $M_n^{(5)}$. These two empirical distributions are quite different in their location and scale.

Figure \ref{Fig:2} illustrates the finite-sample precision of the asymptotic standard Gumbel limit distribution for the empirical distribution of properly normalized maxima in our six examples. The histograms are for the statistics left-hand side in Eq.\ \eqref{g1}-\eqref{g6} and the black line shows the density of the standard Gumbel limit distribution to draw a comparison. For all examples, the Gumbel limit distribution closely tracks the empirical distributions. In particular, the fit for convoluted half-normally distributed random variables appears to be as good as in the standard normal case. With their different normalizing sequences, the \emph{normalized} maxima have the same standard Gumbel limit distribution, including normalized versions of $M_n^{(3)}$ and $M_n^{(5)}$, whose empirical distributions are quite different as shown in the right plot of Figure \ref{Fig:1}.

\begin{figure}[t]
\begin{center}
\includegraphics[width=6.5cm]{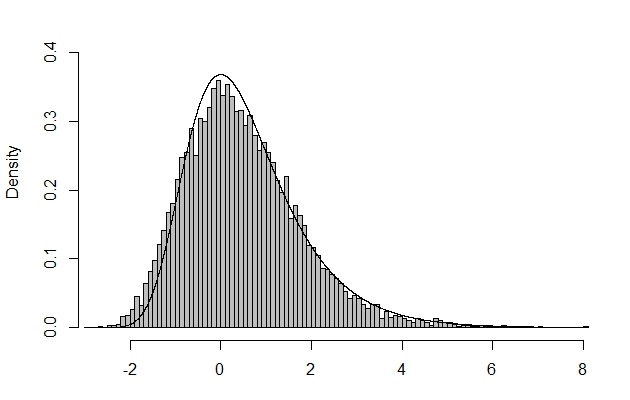}~~~~\includegraphics[width=6.5cm]{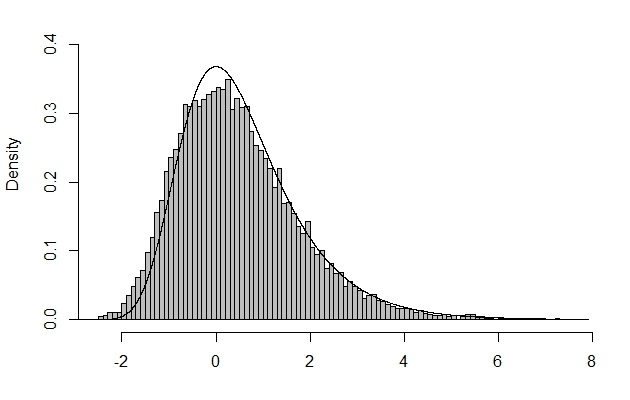}\\
\includegraphics[width=6.5cm]{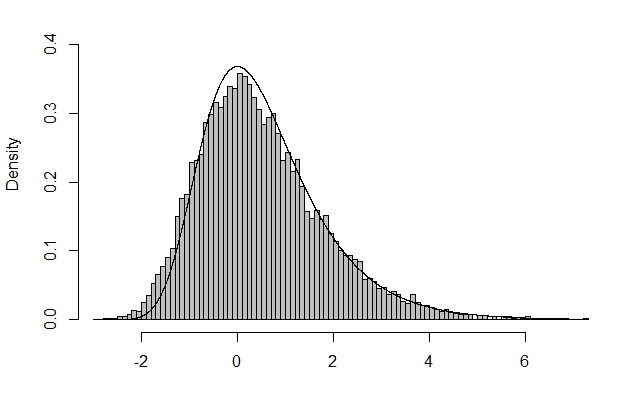}~~~~\includegraphics[width=6.5cm]{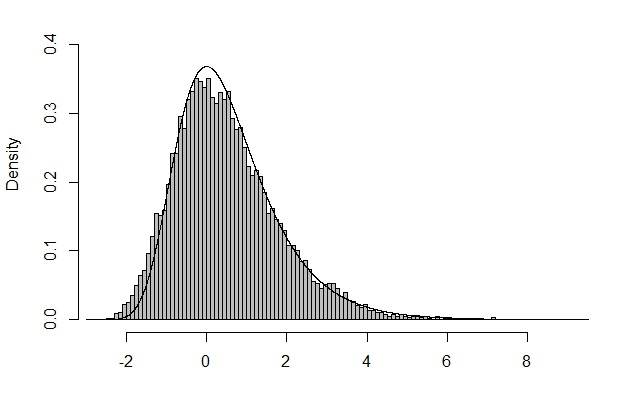}\\
\includegraphics[width=6.5cm]{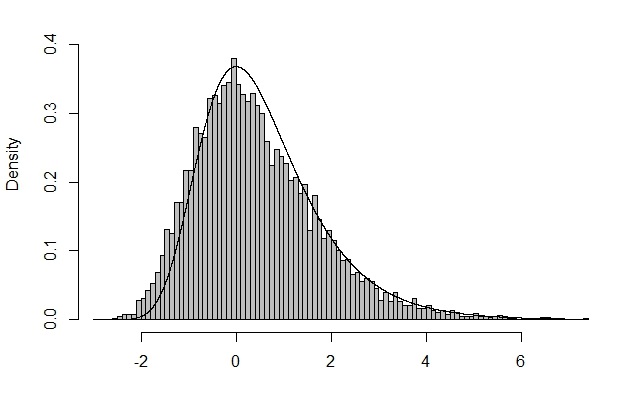}~~~~\includegraphics[width=6.5cm]{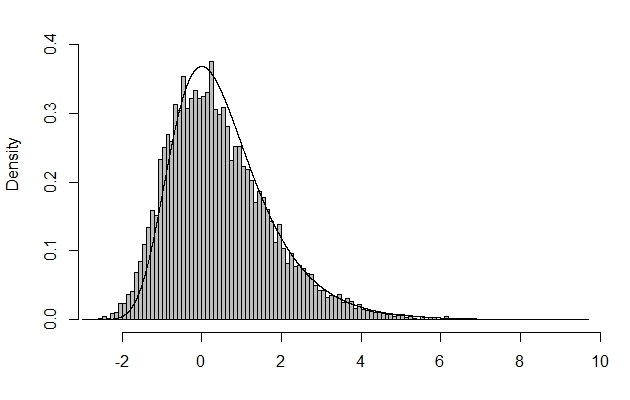}
\caption{\label{Fig:2}Histograms for the normalized maxima $M_n^{(k)}$, $k=1,\ldots,6$, from left to right and top down. The histograms give the values of the statistics left-hand side in Eq.\ \eqref{g1}-\eqref{g6}. The black line gives the density of the Gumbel limit distribution.}
\end{center}\end{figure} 

\clearpage

\bibliographystyle{chicago}
\bibliography{literatur}
\addcontentsline{toc}{section}{References}
\end{document}